\documentclass[12pt]{article}

\usepackage{subfigure}

\usepackage{latexsym}
\usepackage{graphics}
\usepackage{amsmath}
\usepackage{xspace}
\usepackage{amssymb}
\usepackage{psfrag}
\usepackage{epsfig}
\usepackage{amsthm}
\usepackage{pst-all}

\usepackage{color}

\definecolor{Red}{rgb}{1,0,0}
\definecolor{Blue}{rgb}{0,0,1}
\definecolor{Olive}{rgb}{0.41,0.55,0.13}
\definecolor{Yarok}{rgb}{0,0.5,0}
\definecolor{Green}{rgb}{0,1,0}
\definecolor{MGreen}{rgb}{0,0.8,0}
\definecolor{DGreen}{rgb}{0,0.55,0}
\definecolor{Yellow}{rgb}{1,1,0}
\definecolor{Cyan}{rgb}{0,1,1}
\definecolor{Magenta}{rgb}{1,0,1}
\definecolor{Orange}{rgb}{1,.5,0}
\definecolor{Violet}{rgb}{.5,0,.5}
\definecolor{Purple}{rgb}{.75,0,.25}
\definecolor{Brown}{rgb}{.75,.5,.25}
\definecolor{Grey}{rgb}{.5,.5,.5}

\setlength{\oddsidemargin}{-.20in}
\setlength{\evensidemargin}{-.20in} \setlength{\textwidth}{6.8in}
\setlength{\topmargin}{-0.6in} \setlength{\textheight}{9.1in}

\pagenumbering{arabic}

\newcommand{\R}{\mathbb{R}}
\newcommand{\Z}{\mathbb{Z}}
\newcommand{\G}{\mathbb{G}}

\newcommand{\ignore}[1]{\relax}

\newtheorem{theorem}{Theorem}[section]

\newtheorem{proposition}[theorem]{Proposition}

\definecolor{Red}{rgb}{1,0,0}
\definecolor{Blue}{rgb}{0,0,1}
\definecolor{Olive}{rgb}{0.41,0.55,0.13}
\definecolor{Green}{rgb}{0,1,0}
\definecolor{MGreen}{rgb}{0,0.8,0}
\definecolor{DGreen}{rgb}{0,0.55,0}
\definecolor{Yellow}{rgb}{1,1,0}
\definecolor{Cyan}{rgb}{0,1,1}
\definecolor{Magenta}{rgb}{1,0,1}
\definecolor{Orange}{rgb}{1,.5,0}
\definecolor{Violet}{rgb}{.5,0,.5}
\definecolor{Purple}{rgb}{.75,0,.25}
\definecolor{Brown}{rgb}{.75,.5,.25}
\definecolor{Grey}{rgb}{.5,.5,.5}
\definecolor{Pink}{rgb}{1,0,1}
\definecolor{DBrown}{rgb}{.5,.34,.16}
\definecolor{Black}{rgb}{0,0,0}




\usepackage{float}

\author{
}

\author{
{\sf David Gamarnik}\thanks{MIT; e-mail: {\tt gamarnik@mit.edu}. Support from ONR Grant N00014-17-1-2790 is gratefully acknowledged}
}

\begin{document}

\title{Explicit construction of RIP matrices  is Ramsey-hard}
\date{\today}

\maketitle

\begin{abstract}
 Matrices $\Phi\in\R^{n\times p}$ satisfying the Restricted Isometry Property (RIP) are an important ingredient of the compressive sensing methods. While it is 
known that random matrices satisfy the RIP with high probability even for $n=\log^{O(1)}p$, 
the explicit deteministic construction of such matrices defied the repeated efforts, and  most of the known
approaches hit the so-called $\sqrt{n}$ sparsity bottleneck. The notable exception is the work by Bourgain et al \cite{bourgain2011explicit} constructing
an $n\times p$ RIP matrix with sparsity $s=\Theta(n^{{1\over 2}+\epsilon})$, but in the regime $n=\Omega(p^{1-\delta})$.

In this short note we  resolve this open question in a sense by showing that an explicit construction of a matrix satisfying the RIP in the regime
$n=O(\log^2 p)$ and $s=\Theta(n^{1\over 2})$
implies an explicit construction of a three-colored Ramsey graph on $p$ nodes with clique sizes bounded by $O(\log^2 p)$ --
a question in the field of extremal combinatorics which has been open for decades.
\end{abstract}


\section{Background}
An $n\times p$ real valued matrix $\Phi$ with unit norm column vectors is said to satisfy $(s,\delta)$ 
Restricted Isometry Property (RIP) for $s\le p$ and $\delta\in (0,1)$ if $|\|\Phi x\|_2^2-1|\le \delta$ for all $x\in\R^p, \|x\|_2=1$  which
are $s$-sparse, namely have at most $s$ non-zero coordinates. In this case we simply say that the matrix $\Phi$ is RIP.
The RIP is of fundamental importance in compressive sensing methods~\cite{foucart2013mathematical},\cite{buhlmann2011statistics}.
Provided $\Phi$ is RIP, the linear programming based method enable efficient (polynomial time) unique recovery of the solution $x$
of the linear system $y=\Phi x$, from $y$ and $\Phi$, whenever $x\in\R^p$ is $2s$-sparse. It is known that generating entries
of $\Phi$ i.i.d. from a common distribution, satisfying minor properties such as sub-Gaussianity and then normalizing columns of $\Phi$ to unit
norm, guarantees RIP provided $n=\Omega(s\log(p/s))$~\cite{foucart2013mathematical}. 
Here $O(\cdot),\Omega(\cdot)$ and $\Theta(\cdot)$ are standard order of magnitude notations.
This includes the case when $s=\log^{O(1)}p$. As a result it suffices to have  poly-log values of $n$ as well: $n=\log^{O(1)}p$. 

At the same time,  verifying whether a given matrix $\Phi$ satisfies the RIP is tricky, as the problem of certifying RIP of a matrix
in the worst case is NP-hard~\cite{bandeira2013certifying}. Furthermore,  even determining the RIP value $\delta$ up to a certain approximation factor
is hard in the average case sense, as shown in~\cite{koiran2014hidden} by reduction from the Planted Clique Problem. Motivated by this
complications, the researchers have sought to obtain explicit deterministic constructions of matrices satisfying the RIP. 
While there are constructions of RIP matrices when $s=O(\sqrt{n})$~\cite{applebaum2009chirp},
\cite{devore2007deterministic},\cite{fickus2010steiner},
most methods however break down when $s$ is at least some constant times $\sqrt{n}$, see Bandeira et al.~\cite{bandeira2013road} for a survey of the known methods,
leaving this case as an open problem. This problem  since then was popularly dubbed as the so-called ''square root bottleneck'' problem. 
It  was also raised  in 
Tao's blog~\cite{TaoUUPBlog}, as well as in a blog discussion by Moreira~\cite{MoreiraRIPBlog}. 
A notable exception to the square root bottleneck
is the construction by Bourgain et al. \cite{bourgain2011explicit} which breaks this barrier by achieving $s=\Theta(n^{{1\over 2}+\epsilon})$
for some small constant $\epsilon>0$, in the regime $n=\Omega(p^{1-\delta})$ and with matrix containing complex valued entries. Thus the
cases of poly-logarithmic $n$ and $s$,  or even polynomial $n$ and $s$ with real valued entries remain open.

In this short note we give a very simple argument that this problem is ''Ramsey-hard'' so to speak, see~\cite{graham1990ramsey} for a book
reference on Ramsey theory and~\cite{conlon2015recent} for a survey. Specifically, we show that an explicit construction
of an RIP matrix when $n=O(\log^2 p)$ and $s\ge 2\sqrt{n}+1$ implies an explicit construction of a 3-colored Ramsey graph on $p$
nodes with monochromatic clique sizes bounded by $O(\log^2 p)$. To elaborate,
a complete graph $K_p$ on $p$ nodes with $p(p-1)/2$ edges colored using $q$ colors is called $R(m_1,\ldots,m_q)$ Ramsey if the largest monochromatic clique
with respect to color $r$ (a subset of nodes with all induced edges colored $r$)
is at most $m_r$ for all $1\le r\le q$. When all $m_r$ are identically $m$ we simply write $R(m;q)$. 
While the uniform random coloring provides an easy construction of  $R(m;q)$ Ramsey graphs with $m=O(\log p)$ w.h.p. as $p\to\infty$, explicit construction
of such graphs is a major open problem in the extremal combinatorics even for larger order of magnitude $m$. The best known construction
for $q=2$ gives $m=\left(\log p\right)^{\log\log\log^{O(1)}p}$~\cite{cohen2017towards}. The case $m=\log^{O(1)} p$
is believed to be out of reach using the known methods, and the problem of explicit construction of graphs satisfying this Ramsey type property 
has been open since the celebrated work of Erd{\"o}s~\cite{erdos1947some}.

\section{Construction}
Suppose $\Phi\in \R^{n\times p}$ is a matrix with unit norm column vectors, $\Phi=(u_i, 1\le i\le p)$, $\|u_i\|_2=1$,
which satisfies the $(s,\delta)$-RIP with $\delta\in (0,1)$ and $s\ge 2\sqrt{n}+1$. 
Consider the complete graph $K_{[p]}$ 
on the set of nodes $[p]=\{1,2,\ldots,p\}$. We color all of the ${p\choose 2}$ edges of this graph 
with three colors, white, blue and red, as follows. For each $1\le i\ne j\le p$, the color 
of $(i,j)$ is white if $|\langle u_i,u_j\rangle|\le 1/(2\sqrt{n})$, blue if $\langle u_i,u_j\rangle > 1/(2\sqrt{n})$ and
red if $\langle u_i,u_j\rangle < -1/(2\sqrt{n})$. Here $\langle \cdot,\cdot\rangle$ denotes standard inner product in $\R^n$.

\begin{theorem}\label{theorem:main}
The graph $\G$ is $R(2n,2\sqrt{n}+1,2\sqrt{n}+1;3)$ Ramsey.
\end{theorem}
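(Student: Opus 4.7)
The plan is to handle each color class independently: the two ``biased'' colors (blue and red) will be bounded via the RIP applied to the uniform indicator vector of the clique, while the white color, where inner products are merely small in absolute value, will be bounded via a rank/trace estimate on the Gram matrix. I do not expect the RIP hypothesis to be needed for the white case.

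For a blue clique $S$ of size $k$, my plan is to first argue $k\le s$ and then apply RIP to $x = k^{-1/2}\mathbf{1}_S\in\R^p$. The identity
\[
\|\Phi x\|_2^2 = \frac{1}{k}\sum_{i,j\in S}\langle u_i,u_j\rangle = 1 + \frac{1}{k}\sum_{i\ne j\in S}\langle u_i,u_j\rangle
\]
combined with $\langle u_i,u_j\rangle>1/(2\sqrt n)$ on blue edges gives $\|\Phi x\|_2^2 > 1 + (k-1)/(2\sqrt n)$. To reduce to the $s$-sparse case, if $k>s$ I will pass to an arbitrary sub-clique $T\subset S$ of size $s$; since $s\ge 2\sqrt n+1$, the same computation already yields $\|\Phi x_T\|_2^2 > 2$, violating $(s,\delta)$-RIP with $\delta<1$. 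Hence $k\le s$, and applying RIP directly to $\mathbf{1}_S/\sqrt k$ yields $1+(k-1)/(2\sqrt n) < 1+\delta < 2$, i.e.\ $k<2\sqrt n+1$. The red case is symmetric: the sum of off-diagonal inner products is now strictly less than $-k(k-1)/(2\sqrt n)$, so the sub-clique argument uses $\|\Phi x_T\|_2^2<0$, an outright contradiction with the non-negativity of the squared norm, forcing $k\le s$; then RIP gives $1-(k-1)/(2\sqrt n) > 1-\delta > 0$ and again $k<2\sqrt n+1$.

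For a white clique $S$ of size $k$, the RIP does not directly help because the off-diagonal inner products could cancel out when summed. Instead I will use the Gram matrix $G=\Phi_S^\top\Phi_S\in\R^{k\times k}$, which has diagonal entries $1$, off-diagonal entries of absolute value at most $1/(2\sqrt n)$, and rank at most $n$. Then $\mathrm{tr}(G)=k$ and
\[
\mathrm{tr}(G^2) = \|G\|_F^2 \le k + k(k-1)\cdot \frac{1}{4n}.
\]
Applying Cauchy--Schwarz to the (at most $n$) nonzero eigenvalues of $G$, namely $k^2=(\mathrm{tr}\,G)^2\le n\,\mathrm{tr}(G^2)$, yields $k^2\le nk + k(k-1)/4$, and a one-line manipulation gives $k\le (4n-1)/3 < 2n$, as required.

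The main conceptual step is the combined handling of blue and red: one must notice that the RIP upper and lower bounds on $\|\Phi x\|_2^2$ pair naturally with positively and negatively biased inner products, and that the hypothesis $s\ge 2\sqrt n+1$ is precisely the threshold needed to force the candidate clique to be $s$-sparse before invoking RIP. The white bound is independent of RIP and is essentially a Welch-type inequality; no obstacle is expected there.
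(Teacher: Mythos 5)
Your proposal is correct. For the blue and red cliques, your argument is essentially the paper's: pass to a subclique of manageable size, observe that the uniform indicator vector has $\|\Phi x\|_2^2$ pushed strictly above $1+\delta$ (blue) or below $1-\delta$ (red), and read off the bound. The only structural difference is that you split this into two steps (first force $k\le s$, then apply RIP directly to the full clique), whereas the paper passes immediately to a subclique of size $2\sqrt n+1\le s$ and obtains $\|\Phi x\|_2^2\ge 2$; both are fine, and the threshold role of $s\ge 2\sqrt n+1$ is identified correctly in both.

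For the white cliques you take a genuinely different route. The paper invokes a simplified Kabatjanskii--Levenstein bound: writing $U$ for the $2n\times 2n$ Gram matrix of $2n$ unit vectors in $\R^n$, the rank constraint forces $\bar U = U - I$ to have eigenvalue $-1$ with multiplicity at least $n$, hence $\mathrm{tr}(\bar U^2)=\sum_{i\ne j}\langle u_i,u_j\rangle^2\ge n$, so some inner product exceeds $1/(2\sqrt n)$ in absolute value; thus no white clique reaches size $2n$. You instead apply Cauchy--Schwarz to the at most $n$ nonzero eigenvalues of the Gram matrix $G$ of the clique, getting $(\mathrm{tr}\,G)^2\le n\,\mathrm{tr}(G^2)$ and hence $k\le(4n-1)/3<2n$. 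This is the standard Welch-bound computation; it uses the rank constraint in a slightly different way (ignoring nothing among the eigenvalues, rather than isolating the $-1$'s of $U-I$), and in fact gives a sharper raw bound, roughly $4n/3$ versus $2n$. Both arguments are elementary rank/trace estimates on the Gram matrix and neither uses RIP for the white color, as you correctly anticipated. The proposal is complete and sound.
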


\begin{proof}

The following proposition is a simplified version of Kabatjanskii-Levenstein bound~\cite{kabatiansky1978bounds} discussed in 
Tao's blog \cite{TaoBlogKabatianskiiLewenstein}. We reproduce the proof from there for completeness.
\begin{proposition}\label{prop:orthogonality}
For any set of unit norm vectors $u_1,\ldots,u_{2n}\in \R^n$, $\max_{1\le i\ne j\le 2n}|\langle u_i,u_j\rangle| >{1\over 2\sqrt{n}}$.
\end{proposition}

\begin{proof}
Consider the symmetric matrix $U=(\langle u_i,u_j\rangle, 1\le i,j\le 2n)\in\R^{2n\times 2n}$ of inner products. This is a rank-$n$ matrix in $\R^{2n\times 2n}$
and as such $\bar U\triangleq U-I_{2n\times 2n}$ has an eigenvalue $-1$ with multiplicity at least $n$. Thus the trace of $\bar U^2$
which is $\sum_{1\le i\ne j\le 2n}(\langle u_i,u_j\rangle )^2$ is at least $n$, implying  $\max_{i\ne j}|\langle u_i,u_j\rangle|\ge {1\over \sqrt{2(2n-1)}}$.
\end{proof}

Now suppose
$C\subset [p]$ is a white clique. Then by Proposition~\ref{prop:orthogonality}, $|C|<2n$. Suppose $C\subset [p]$ is a blue clique
which for the purposes of contradiction satisfies $|C|\ge 2\sqrt{n}+1$.
Take any subset of $C$ with cardinality $2\sqrt{n}+1$. For simplicity denote it by $C$ as well. 
Let $x\in \R^p$ be the unit norm $\|x\|_2=1$ vector defined by $x_i=1/\sqrt{|C|}, i\in C$ and $x_i=0$ otherwise. This vector is $|C|\le s$-sparse.
Since $C$ supports a blue clique then
\begin{align*}
\|\Phi x\|_2^2-\|x\|_2^2={1\over |C|}\sum_{i\ne j\in C}\langle u_i,u_j\rangle \ge {|C|-1\over 2\sqrt{n}}=1,
\end{align*}
which contradicts RIP. The same argument applies for red cliques. We conclude that our 
graph is  $R(2n,2\sqrt{n}+1,2\sqrt{n}+1;3)$ Ramsey,  and therefore crudely $R(2n;3)$ Ramsey.

\end{proof}

Now find $C>0$ and $\delta\in (0,1)$ so that for $n\ge Cs\log p$ the random $n\times p$ matrix $\Phi$ is RIP w.h.p. for all large enough $p$
for sparsity $s$ and parameter $\delta$.
Set $n=9C^2\log^2 p$ and $s=9C\log p=3\sqrt{n}>2\sqrt{n}+1$. Then $n=Cs\log p$. 
The explicit construction of RIP matrix $\Phi\in\R^{n\times p}$ for this choice of $n$,  
sparsity $s$, and parameter $\delta$  implies, by the above, an explicit construction of an $R(2n;3)$ Ramsey graph on $p$
nodes with $n=9C^2\log^2p=O(\log^2 p)$ -- the task which appears not yet reachable with known techniques. 

We note that if the entries of the matrix $\Phi$ are assumed to be non-negative, and in fact some known explicit constructions are based
on non-negative entries~\cite{bandeira2013road} (see below for one such construction), then we can restrict our construction above to just two colors,
corresponding to the cases $0\le \langle u_i,u_j\rangle \le 1/(2\sqrt{n})$ and $\langle u_i,u_j\rangle > 1/(2\sqrt{n})$. Thus in this
case the explicit construction of a matrix $\Phi$ satisfying the stated RIP implies an explicit construction  of a $R(2n;2)$ graph -
an explicit $2$-coloring of edges of a complete graph on $p$ nodes with largest monochromatic cliques of sizes at most $O(\log^2 p)$.

The  case $n=p^{O(1)}$, covered
by~\cite{bourgain2011explicit} remains open for general $s=n^{{1\over 2}+\epsilon}$. 
It would be also interesting to see if, conversely, the  Ramsey graphs can be used to construct the RIP matrices.
It also would be interesting to see if the construction above can be extended to the complex valued matrices in the poly-logarithmic case.

We close this note by suggesting one additional open question. While the case of explicit constructions is solved in the regime $s\ge c\sqrt{n}$
for sufficiently small constant $c>0$, all of the known constructions are in the polynomial regime $n=p^{O(1)},s=p^{O(1)}$. This raises a potential
issue as to whether the square root is even achievable in the polylogarithmic regime $n=\log^{O(1)} p$, or alternatively, in this regime the problem
is even more challenging. We now show that the answer is \emph{essentially} no, but closing this gap entirely is an open question.
 The following construction due to deVore~\cite{devore2007deterministic} leads to an example 
 satisfying $s\ge n^{{1\over 2}-\epsilon}$ for every $\epsilon>0$ in the polylogarithmic regime $n=\log^{O(1)} p$. Fix a prime number $z$ and a positive
 integer $r$. Let $p=z^{r+1}$ be the number of all degree $\le r$ polynomials with coefficients in $\Z_z=\{0,1,\ldots,z-1\}$.
 Consider the $z^2\times p$ matrix $\Phi$ constructed as follows. The rows are indexed by pairs $(x,y)\in\Z_z^2$. Thus 
 the number of rows is $n=z^2$.
 The columns
 are indexed by polynomials described above. For every such polynomial $P$ and every pair $(x,y)$, the matrix entry corresponding 
 to the location $((x,y),P)$ is set to $1/\sqrt{z}$ if $y=P(x)$ and $=0$ otherwise. It is clear that the columns $u_j, 1\le j\le p$
 of this matrix are unit length $\|u\|_2=1$ as they contain precisely $z$ non-zero entries, each valued at $1/\sqrt{z}$. For every two
 distinct columns $u_i,u_j, i\ne j$ we have $|\langle u_i,u_j\rangle|\le r/z$. Indeed, the row $(x,y)$ has non-zero (namely $1/\sqrt{z}$) entries
 in columns indexed by polynomials $P$ and $Q$ only if $y=P(x)=Q(x)$, that is $x$ is a root of $P-Q$, of which only $r$ exist. 
 Thus the coherence of this matrix, defined as $\max_{i\ne j}|\langle u_i,u_j\rangle|$ is at most $r/z$. As a result, the matrix satisfies the $(s,\delta)$
 RIP when $sr/z\le \delta$ (see~\cite{devore2007deterministic} for details). 
 
 Now we fix arbitrary $\epsilon>0$. For each sufficiently large $z$ we set $r=z^\epsilon$ and consider the $z^2\times p$ matrix described
 above with $p=z^{r+1}$. We have $n=z^2$ and the sparsity level satisfying the RIP can be taken as 
 \begin{align*}
 s=(1/2)(z/r)=(1/2)z^{1-\epsilon}=(1/2)n^{{1\over 2}-{\epsilon\over 2}}.
\end{align*} 
Namely, the construction almost achieves the square root barrier. At the same time 
\begin{align*}
p=z^{r+1}>z^r=z^{z^\epsilon}=\exp(z^\epsilon\log z),
\end{align*}
implying
\begin{align*}
n=z^2=\left({\log p\over \log z}\right)^{2\over \epsilon}\le \left(\log p\right)^{2\over \epsilon}.
\end{align*}
Namely, the sample size $n$ is indeed poly-logarithmic in the number of columns $p$. It would be interesting to obtain a construction which
truly achieves the square root barrier $s\ge C\sqrt{n}$ in the polylogarithmic regime $n=\log^{O(1)} p$.

\section*{Acknowledgement} The author thanks Gil Cohen, Jacob Fox and Benny Sudakov for communicating the state of the art results 
on the explicit construction of Ramsey graphs, and Afonso Bandeira for many discussions about constructing RIP matrices. Discussions
with Joel Moreira are also gratefully acknowledged.

\bibliographystyle{amsalpha}

\begin{thebibliography}{BFMW13}

\bibitem[AHSC09]{applebaum2009chirp}
Lorne Applebaum, Stephen~D Howard, Stephen Searle, and Robert Calderbank,
  \emph{Chirp sensing codes: Deterministic compressed sensing measurements for
  fast recovery}, Applied and Computational Harmonic Analysis \textbf{26}
  (2009), no.~2, 283--290.

\bibitem[BDF{\etalchar{+}}11]{bourgain2011explicit}
Jean Bourgain, Stephen Dilworth, Kevin Ford, Sergei Konyagin, Denka Kutzarova,
  et~al., \emph{Explicit constructions of rip matrices and related problems},
  Duke Mathematical Journal \textbf{159} (2011), no.~1, 145--185.

\bibitem[BDMS13]{bandeira2013certifying}
Afonso~S Bandeira, Edgar Dobriban, Dustin~G Mixon, and William~F Sawin,
  \emph{Certifying the restricted isometry property is hard}, IEEE transactions
  on information theory \textbf{59} (2013), no.~6, 3448--3450.

\bibitem[BFMW13]{bandeira2013road}
Afonso~S Bandeira, Matthew Fickus, Dustin~G Mixon, and Percy Wong, \emph{The
  road to deterministic matrices with the restricted isometry property},
  Journal of Fourier Analysis and Applications \textbf{19} (2013), no.~6,
  1123--1149.

\bibitem[BVDG11]{buhlmann2011statistics}
Peter B{\"u}hlmann and Sara Van De~Geer, \emph{Statistics for high-dimensional
  data: methods, theory and applications}, Springer Science \& Business Media,
  2011.

\bibitem[CFS15]{conlon2015recent}
David Conlon, Jacob Fox, and Benny Sudakov, \emph{Recent developments in graph
  ramsey theory.}, Surveys in combinatorics \textbf{424} (2015), 49--118.

\bibitem[Coh17]{cohen2017towards}
Gil Cohen, \emph{Towards optimal two-source extractors and ramsey graphs},
  Proceedings of the 49th Annual ACM SIGACT Symposium on Theory of Computing,
  ACM, 2017, pp.~1157--1170.

\bibitem[DeV07]{devore2007deterministic}
Ronald~A DeVore, \emph{Deterministic constructions of compressed sensing
  matrices}, Journal of complexity \textbf{23} (2007), no.~4-6, 918--925.

\bibitem[Erd47]{erdos1947some}
Paul Erd{\"o}s, \emph{Some remarks on the theory of graphs}, Bulletin of the
  American Mathematical Society \textbf{53} (1947), no.~4, 292--294.

\bibitem[FMT10]{fickus2010steiner}
Matthew Fickus, Dustin~G Mixon, and Janet~C Tremain, \emph{Steiner equiangular
  tight frames}, arXiv preprint arXiv:1009.5730 (2010).

\bibitem[FR13]{foucart2013mathematical}
Simon Foucart and Holger Rauhut, \emph{A mathematical introduction to
  compressive sensing}, Springer, 2013.

\bibitem[GGRS90]{graham1990ramsey}
Ronald~L Graham, Ronald~Lewis Graham, Bruce~L Rothschild, and Joel~H Spencer,
  \emph{Ramsey theory}, vol.~20, John Wiley \& Sons, 1990.

\bibitem[KL78]{kabatiansky1978bounds}
Grigorii~Anatol'evich Kabatiansky and Vladimir~Iosifovich Levenshtein, \emph{On
  bounds for packings on a sphere and in space}, Problemy Peredachi Informatsii
  \textbf{14} (1978), no.~1, 3--25.

\bibitem[KZ14]{koiran2014hidden}
Pascal Koiran and Anastasios Zouzias, \emph{Hidden cliques and the
  certification of the restricted isometry property}, IEEE Transactions on
  Information Theory \textbf{60} (2014), no.~8, 4999--5006.

\bibitem[Mor]{MoreiraRIPBlog}
J.~Moreira,
  https://joelmoreira.wordpress.com/2014/07/31/the-restricted-isometry-property/.

\bibitem[Taoa]{TaoUUPBlog}
T.~Tao,
  https://terrytao.wordpress.com/2007/07/02/open-question-deterministic-uup-matrices/.

\bibitem[Taob]{TaoBlogKabatianskiiLewenstein}
\bysame, https://terrytao.wordpress.com/2013/07/18/
  a-cheap-version-of-the-kabatjanskii-levenstein-bound-for-almost-orthogonal-vectors/.

\end{thebibliography}

\newcommand{\etalchar}[1]{$^{#1}$}
\providecommand{\bysame}{\leavevmode\hbox to3em{\hrulefill}\thinspace}
\providecommand{\MR}{\relax\ifhmode\unskip\space\fi MR }
\providecommand{\MRhref}[2]{%
  \href{http://www.ams.org/mathscinet-getitem?mr=#1}{#2}
}
\providecommand{\href}[2]{#2}

\end{document}